\theoremstyle{plain}
\newtheorem{theorem}{Theorem}
\newtheorem{lemma}{Lemma}
\newtheorem{question}{Question}
\begin{document}

\title[Linear independence of series related to the Thue--Morse sequence]{Linear independence of series related to the Thue--Morse sequence along powers}

\author{Michael Coons}
\author{Yohei Tachiya}
\address{Department of Mathematics and Statistics,\newline
\hspace*{\parindent}California State
  University, Chico, CA 95929, USA} \email{mjcoons@csuchico.edu}
\address{Graduate School of Science and Technology,\newline
\hspace*{\parindent}Hirosaki University, Hirosaki 036-8561, Japan}
\email{tachiya@hirosaki-u.ac.jp}

\makeatletter
\@namedef{subjclassname@2020}{%
  \textup{2020} Mathematics Subject Classification}
\makeatother

\keywords{Thue--Morse sequence, Linear independence, Pisot numbers, Salem numbers}

\subjclass[2020]{11J72, 11A63, 11B85}

\date{\today}

\begin{abstract} 
The Thue--Morse sequence $\{t(n)\}_{n\geqslant 1}$ is the indicator function of the parity of the number of ones in the binary expansion of positive integers $n$, 
where $t(n)=1$ (resp.~$=0$) if the binary expansion of $n$ has an odd (resp.~even) number of ones. 
In this paper, we generalize a recent result of E.~Miyanohara by showing that, for a fixed Pisot or Salem number $\beta>\sqrt{\varphi}=1.272019649\ldots$, 
the set of the numbers 
\[
1,\quad 
\sum_{n\geqslant  1}\frac{t(n)}{\beta^{n}},\quad 
\sum_{n\geqslant  1}\frac{t(n^2)}{\beta^{n}},\quad 
\dots, \quad
\sum_{n\geqslant  1}\frac{t(n^k)}{\beta^{n}},\quad \dots
\]
is linearly independent over the field $\mathbb{Q}(\beta)$, 
where $\varphi:=(1+\sqrt{5})/2$ is the golden ratio. 
Our result implies that for any $k\geqslant  1$ and for any $a_1,a_2,\ldots,a_k\in\mathbb{Q}(\beta)$, not all zero, 
the sequence \{$a_1t(n)+a_2t(n^2)+\cdots+a_kt(n^k)\}_{n\geqslant 1}$ cannot be eventually periodic. 
\end{abstract}

\maketitle

\section{Introduction}

Let $s_2(n)$ denote the number of ones in the binary expansion of $n$. 
The {\em Thue--Morse sequence} $t$ is defined, for $n\geqslant  1$, by $t(n)=1$ if $s_2(n)$ is odd, and $t(n)=0$ if $s_2(n)$ is even. 
The Thue--Morse sequence is paradigmatic in the areas of complexity and symbolic dynamics, and as such is an object of current interest in a variety of areas. 
While the sequence goes back at least to the 1851 paper of Prouhet \cite{P1851}, its interest in the context of complexity is usually attributed to Thue \cite{T1906}, 
who, in 1906, showed that $t$ is cube-free; that is, viewing $t$ as a one-sided infinite word 
$t(1)t(2)t(3)\cdots$, it contains no three consecutive identical binary blocks $aaa$. 
In particular, this shows that the Thue--Morse sequence is not periodic, though the proof of non-periodicity is much less deep than the cube-freeness. 
The non-periodicity implies that the number $\sum_{n\geqslant  1}t(n)b^{-n}$ is irrational for all integers $b\geqslant  2$, but more strongly, 
a result of Mahler \cite{M1930} from 1930 provides the transcendence 
of $\sum_{n\geqslant  1}t(n)\alpha^{-n}$ for any algebraic number $\alpha$ with $|\alpha|>1$. 
In this direction, recently, Miyanohara \cite{M2023} 
showed that if $\beta>2$ is a Pisot or Salem number, then the number $\sum_{n\geqslant  1}t(n^2){\beta^{-n}}$ does not belong to the field $\mathbb{Q}(\beta)$. 

In this paper, we generalize Miyanohara's results by proving the following theorem. 
Throughout the paper, $\varphi:=(1+\sqrt{5})/2$ denotes the golden ratio. 
Recall that an infinite set of numbers is called linearly independent if every non-empty finite subset is linearly independent. 

\begin{theorem}\label{thm:main} 
Let $\beta$ be a Pisot or Salem number with $\beta>\sqrt{\varphi}=1.272019649\ldots$. 
Then the numbers 
\[
1,\quad \sum_{n\geqslant  1}\frac{t(n)}{\beta^n},\quad 
\sum_{n\geqslant  1}\frac{t(n^2)}{\beta^n},\quad \ldots,\quad \sum_{n\geqslant  1}\frac{t(n^k)}{\beta^n},\quad \ldots
\] 
are linearly independent over $\mathbb{Q}(\beta).$ 
In particular, the result holds for all Pisot numbers.
\end{theorem}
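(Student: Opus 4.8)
The plan is to argue by induction on $k$, writing $\xi_i:=\sum_{n\geq 1}t(n^i)\beta^{-n}$. The base case $k=1$ is the transcendence of $\xi_1$ recalled in the introduction, which gives $\xi_1\notin\mathbb{Q}(\beta)$. Assume $k\geq 2$ and that $1,\xi_1,\dots,\xi_{k-1}$ are linearly independent over $\mathbb{Q}(\beta)$, and suppose for contradiction that $a_0+a_1\xi_1+\cdots+a_k\xi_k=0$ with $a_i\in\mathbb{Q}(\beta)$ not all zero. The induction hypothesis forces $a_k\neq 0$; clearing denominators we may take all $a_i\in\mathbb{Z}[\beta]$. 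Put $u(n):=\sum_{i=1}^k a_i t(n^i)$, a sequence with values in the finite set $\{\sum_{i\in T}a_i:T\subseteq\{1,\dots,k\}\}\subset\mathbb{Z}[\beta]$; the relation becomes $\sum_{n\geq 1}u(n)\beta^{-n}=-a_0\in\mathbb{Z}[\beta]$, and the goal is to extract from it a nontrivial $\mathbb{Q}(\beta)$-relation among $1,\xi_1,\dots,\xi_{k-1}$.

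The first ingredient is a rigidity principle. For $N\geq 0$ set $\tau_N:=\sum_{j\geq 1}u(N+j)\beta^{-j}=\beta^N(-a_0)-\sum_{n=1}^N u(n)\beta^{N-n}\in\mathbb{Z}[\beta]$. Then $\tau_N$ is bounded, with $|\tau_N|\leq \max_n|u(n)|/(\beta-1)$, and — since $\beta$ is Pisot or Salem, so every conjugate of $\beta$ has modulus at most $1$ — all conjugates of $\tau_N$ are bounded as well: geometrically for the conjugates of modulus $<1$, and, for the modulus-one conjugates that appear in the Salem case, by an estimate for which the lower bound $\beta>\sqrt\varphi$ is needed. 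As $\mathbb{Z}[\beta]$ is discrete in its Minkowski embedding, $\{\tau_N:N\geq 0\}$ is finite, so any subsequence $\tau_{N_r}$ that converges in $\mathbb{R}$ is eventually constant, with limit in $\mathbb{Z}[\beta]\subseteq\mathbb{Q}(\beta)$.

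The second ingredient is a binary-digit identity: if $m$ is odd, $i\leq k$, and $0\leq j<2^{a/k}$ with $a$ large, then the nonzero binary blocks of the $i+1$ terms of $(2^am+j)^i=\sum_{l=0}^i\binom{i}{l}m^{i-l}2^{a(i-l)}j^l$ are pairwise disjoint, so $t((2^am+j)^i)\equiv\bigoplus_{l=0}^i t\big(\binom{i}{l}m^{i-l}j^l\big)\pmod 2$, independently of $a$. Hence, for each fixed odd $m$, the shifted tail $\tau_{2^am}$ converges as $a\to\infty$ (the part $j\geq 2^{a/k}$ contributing $O(\beta^{-2^{a/k}})$) to the explicit series $\eta_m:=\sum_{j\geq 1}v_m(j)\beta^{-j}$, where $v_m(j):=\sum_{i=1}^k a_i\big(\bigoplus_{l=0}^i t(\binom{i}{l}m^{i-l}j^l)\big)$; by the rigidity principle $\eta_m\in\mathbb{Q}(\beta)$ for every odd $m$. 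This can be iterated: the map $\sigma$ taking a series $\sum_j w(j)\beta^{-j}$ to the limit of its shifted tails at $2^a$ sends $\sum_j t(Cj^l)\beta^{-j}$ to $\sum_j\big(\bigoplus_{l'=0}^l t(C\binom{l}{l'}j^{l'})\big)\beta^{-j}$ and is multiplicative on products $t(C_1j^{l_1})\cdots t(C_rj^{l_r})$. Since the exponent $l$ never increases under $\sigma$, and the multiplier changes only when $l$ strictly decreases, only finitely many pairs (multiplier, exponent) are reachable from $(1,i)$; all the series in play therefore lie in one finite-dimensional $\mathbb{Q}(\beta)$-space, on which $\sigma$ acts linearly, and — the sequences involved being finitely valued — each $\sigma$-orbit is finite, hence eventually periodic.

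It remains to combine these facts. Expanding the iterated XOR's, $v_m(j)$ is an integer-coefficient polynomial in the reachable sequences $j\mapsto t(Cj^l)$ in which $t(j^k)$ occurs only affinely, through the term $i=l=k$, with coefficient $\pm a_k$. Choosing finitely many odd $m$, forming a suitable $\mathbb{Z}[\beta]$-combination of the corresponding $\eta_m$, and using the relations $\xi_1,\dots,\xi_{k-1}$ together with the eventual periodicity of the $\sigma$-orbits to dispose of the lower-order and \emph{mixed} series (the products cycle under $\sigma$, so are killed by combining consecutive applications, as one checks by hand when $k=2$), one is left with a nontrivial $\mathbb{Q}(\beta)$-relation among $1,\xi_1,\dots,\xi_{k-1}$, contradicting the induction hypothesis. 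The step I expect to be the main obstacle is making this last elimination \emph{quantitative}: overlap-freeness of the Thue--Morse word forces the error series that arise to be supported, at the relevant scale, on index sets with no two consecutive members, so that the worst-case error equals $\beta^{-2}+\beta^{-6}+\beta^{-10}+\cdots=\beta^{-2}/(1-\beta^{-4})$, and the inequality $\beta^{-2}/(1-\beta^{-4})<1$ — equivalently $\beta^{-2}+\beta^{-4}<1$, i.e. $\beta>\sqrt\varphi$ — is exactly what forces the extracted identities to pin down the coefficients rather than merely constrain them; the same bound is what controls the modulus-one conjugates in the Salem case.
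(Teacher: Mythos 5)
Your proposal has two genuine gaps, and the second one is where the entire content of the theorem lives.

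First, the ``rigidity principle'' as stated is false for Salem numbers. Writing $\tau_N=\beta^N(-a_0)-\sum_{n=1}^N u(n)\beta^{N-n}$, a conjugate $\beta_i$ of modulus exactly $1$ (which a Salem number always has) gives $|\tau_N^{(i)}|=O(N)$, not $O(1)$: the geometric-series bound you invoke only works when $|\beta_i|<1$. So $\{\tau_N\}$ is a set of algebraic integers of bounded degree and \emph{polynomially growing} house, which need not be finite, and discreteness of the Minkowski embedding does not apply. Your parenthetical that the bound $\beta>\sqrt\varphi$ controls the modulus-one conjugates is not substantiated and is not the role that bound plays anywhere. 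The conclusion you want (a rapidly convergent subsequence of $\tau_N$ is eventually constant) can be rescued by a norm inequality --- a nonzero $\tau_{N}-\tau_{N'}\in\mathbb{Z}[\beta]$ with absolute value $O(\beta^{-2^{a/k}})$ and conjugates $O(N)$ has norm tending to $0$ --- but that is exactly the Liouville-type estimate the paper builds explicitly (its inequalities \eqref{eq:pnbetasmall}--\eqref{fin}), and it has to be set up with quantitative control you have not provided.

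Second, and decisively, the final ``elimination'' is missing, and it is the theorem. Since $\bigoplus_l b_l=\tfrac12\bigl(1-\prod_l(1-2b_l)\bigr)$, the term $t(j^k)$ enters your $v_m(j)$ with the $j$-dependent sign $\prod_{l<k}\bigl(1-2t(\binom{k}{l}m^{k-l}j^l)\bigr)$, not with a fixed coefficient $\pm a_k$; so one cannot peel off an $a_k\xi_k$-type series, and the mixed product series $\sum_j t(C_1j^{l_1})\cdots t(C_rj^{l_r})\beta^{-j}$ bear no a priori relation to $1,\xi_1,\dots,\xi_{k-1}$. Nothing in the proposal shows that ``$\eta_m\in\mathbb{Q}(\beta)$ for all odd $m$'' is contradictory, and your closing appeal to overlap-freeness forcing errors onto index sets with no two consecutive members is asserted, not proved --- the coincidence that $\beta^{-2}+\beta^{-4}<1$ reproduces the threshold $\sqrt\varphi$ is not evidence that this mechanism works. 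The paper's route is different and avoids all of this: it does not induct on $k$, but instead engineers (via the congruence construction of Lemma~\ref{lem:1104} and the choice of $y$) a single shift $y2^{\kappa(N)}\mapsto y2^{\kappa(N)+1}$ under which $s(\cdot)$ is provably unchanged at all $j\leqslant 2^N-1$, at $j=2^N+3$, and at all relevant even $j$ (Lemmas~\ref{lem:144}, \ref{lem:1431}), yet provably \emph{changes} at the one index $j=2^N+1$ (Lemma~\ref{lem:113}); the resulting $q_N\xi-p_N$ is then nonzero because the $j=2^N+1$ term dominates the tail precisely when $\beta^4-\beta^2-1>0$, i.e.\ $\beta>\sqrt\varphi$, and the norm argument finishes. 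That guaranteed single point of nonvanishing is the idea your sketch lacks.
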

Let $\beta$ be as in Theorem~\ref{thm:main}. 
Theorem \ref{thm:main} asserts that the sequence of the non-trivial $\mathbb{Q}(\beta)$-linear combination of the Thue--Morse sequence along powers
\begin{equation}\label{eq:1211}
s(n):=a_1t(n)+a_2t(n^2)+\cdots+a_kt(n^k)
\end{equation}
can not be eventually periodic, and that the number $\gamma:=\sum_{n\geqslant  1}s(n)\beta^{-n}$ 
does not belong to $\mathbb{Q}(\beta)$. 
Moreover, combining the result of Schmidt~\cite{S1980}, we find that   
the $\beta$-expansion of $\gamma$ is not eventually periodic. 
In particular, $\gamma_k:=\sum_{n\geqslant1}^{}t(n^k)\beta^{-n}\notin\mathbb{Q}(\beta)$ for every integer $k\geqslant1$  and 
the $\beta$-expansion of $\gamma_k$ is not eventually periodic. 

The Thue--Morse sequence along powers has been studied by several authors, with the most focus on 
the sequence $\{t(n^2)\}_{n\geqslant 1}$. 
In 2007, Moshe~\cite{YM2007} showed that 
the sequence $\{t(n^2)\}_{n\geq1}$ contains all finite binary words, namely, 
every finite word $w_1w_2\cdots w_m$ $(w_i\in\{0,1\})$ of length $m$ appears in the infinite word $t(1)t(4)t(9)\cdots$ 
(see Section~\ref{sec4} for more details). 
Moreover, this result was generalized by Drmota, Mauduit and Rivat \cite{DMR2019} who 
proved that the sequence $\{t(n^2)\}_{n\geqslant 1}$ is normal. 
As a consequence, the number $\sum_{n\geqslant 1}t(n^2)2^{-n}$ is normal in base $2$. 

The present paper is organized as follows. 
In Section \ref{sec2}, for each 
integers $r=1,2,\dots,k$, we will investigate the appearance of zeros %
in the sequences defined by the difference of $t(n^r)$ and a certain shift $t((n+n_0)^r)$.  
This observation makes it possible to find a good rational approximation to 
\[
\xi:=\sum_{n\geqslant  1}s(n)\beta^{-n},
\]
where the sequence $\{s(n)\}_{n\geq1}$ is defined in \eqref{eq:1211}. 
Section \ref{sec3} is devoted to the proof of Theorem~\ref{thm:main}. 
Finally, in Section \ref{sec4}, we will give remarks and further questions related to our results.


\section{Some useful equalities and unequalities}\label{sec2}

The first lemma allows us to optimize our choice of rational approximations.

\begin{lemma}\label{lem:1104}
For any integer $k\geqslant 2$, there exist positive integers $m$ and $n$ such that the system of simultaneous congruences
\begin{equation}\label{cong}
\begin{array}{rl}
X\equiv&\!\!\! 2^{2m-1}-1\pmod{2^{2m}},\\
3^{k-1}X\equiv&\!\!\! 2^{2n}-1\pmod{2^{2n+1}}
\end{array}
\end{equation}
has an integer solution. 
\end{lemma}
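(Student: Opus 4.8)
The plan is to solve the two congruences separately and then glue them together with the Chinese Remainder Theorem, after checking that the moduli can be made coprime. The first congruence $X\equiv 2^{2m-1}-1\pmod{2^{2m}}$ only constrains $X$ modulo a power of $2$, while the second congruence $3^{k-1}X\equiv 2^{2n}-1\pmod{2^{2n+1}}$ also only constrains $X$ modulo a power of $2$ (since $3^{k-1}$ is a unit modulo $2^{2n+1}$). So both conditions live at the prime $2$, and the natural idea is: first choose $n$ so that the second congruence is solvable and has a specific solution $X$, then read off what $X$ is modulo a small power of $2$, and finally choose $m$ small enough that the first congruence is automatically implied by (or compatible with) the value of $X\bmod 2^{2m}$.

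Concretely, I would first take $m=1$, so the first congruence becomes $X\equiv 1\pmod 4$. The second congruence, for a given $n$, is equivalent to $X\equiv 3^{-(k-1)}(2^{2n}-1)\pmod{2^{2n+1}}$, where $3^{-1}$ is the inverse of $3$ modulo $2^{2n+1}$. For $n\geqslant 1$ this determines $X$ modulo $2^{2n+1}\geqslant 8$, hence in particular modulo $4$. So the whole system is solvable as soon as that forced residue of $X$ modulo $4$ equals $1$, i.e.\ as soon as $3^{k-1}\cdot 1\equiv 2^{2n}-1\equiv -1\pmod 4$, which holds precisely when $3^{k-1}\equiv -1\equiv 3\pmod 4$, i.e.\ when $k$ is even. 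For $k$ even this already finishes the proof with $m=1$ and $n=1$: one checks directly that $X\equiv 3^{-(k-1)}\cdot 3\pmod 8$ satisfies both congruences.

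For $k$ odd, $3^{k-1}\equiv 1\pmod 4$, so with $m=1$ the forced residue is $3$, not $1$, and $m=1$ fails; the fix is to exploit the freedom in $n$ together with a different $m$. Here I would instead work modulo higher powers of $2$: lift using $3^{k-1}\bmod 2^{2n+1}$ explicitly via the $2$-adic expansion of $3^{k-1}$, and observe that as $n$ grows the residue $X\bmod 2^{2m}$ determined by the second congruence runs through a predictable arithmetic progression; then choose $m$ and $n$ so that this residue equals $2^{2m-1}-1\pmod{2^{2m}}$. The cleanest route is a $2$-adic one: since $3$ is a unit in $\mathbb{Z}_2$, the equation $3^{k-1}X = -1 \pmod{2^{2n+1}}$ as $n\to\infty$ pins down a $2$-adic integer $X_0 := -3^{-(k-1)}$, and the first congruence asks that $X_0$ agree with $2^{2m-1}-1$ to $2m$ binary digits — but $2^{2m-1}-1$ has binary expansion $\underbrace{0\cdots0}1\underbrace{1\cdots1}$ (a $1$ in position $2m-1$ followed by $2m-1$ ones), so one needs the low-order binary digits of $-3^{-(k-1)}$ to have this shape for some $m$. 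The main obstacle is precisely this last point: verifying that the $2$-adic integer $-3^{-(k-1)}$ (equivalently, $-3$ raised to an appropriate power, or handled via the structure of $(\mathbb{Z}/2^N\mathbb{Z})^\times$) actually has a string of low digits of the form $0\cdots01\underbrace{1\cdots1}$, uniformly in $k$. I expect this to come down to a short computation with the $2$-adic valuation of $3^{k-1}+1$ and of $3^{k-1}-1$ (using that $v_2(3^{j}-1)$ and $v_2(3^j+1)$ are controlled by the lifting-the-exponent lemma: $v_2(3^j-1)=v_2(j)+2$ for $j$ even and $=1$ for $j$ odd, and correspondingly for $3^j+1$), which will tell us exactly how many trailing $1$'s appear and hence which $m$ works, with $n$ then chosen as any integer with $2n+1$ at least the number of binary digits we needed to control.
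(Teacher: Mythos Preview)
Your even-$k$ case is exactly the paper's: $X=1$, $m=n=1$.

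For odd $k$ there is a genuine gap. You rewrite the second congruence as ``$3^{k-1}X\equiv -1\pmod{2^{2n+1}}$'', but $2^{2n}-1\not\equiv -1\pmod{2^{2n+1}}$; the correct statement is that $3^{k-1}X+1\equiv 2^{2n}\pmod{2^{2n+1}}$, i.e.\ $v_2(3^{k-1}X+1)=2n$ \emph{exactly}. In your ``take $n$ large, then find $m$'' scheme you are forcing $2m\leqslant 2n$, and then the second congruence reduces mod $2^{2m}$ to $X\equiv -3^{-(k-1)}\pmod{2^{2m}}$. Matching this with $X\equiv 2^{2m-1}-1\pmod{2^{2m}}$ forces $v_2(3^{k-1}-1)=2m-1$, so your approach only succeeds when $s:=v_2(3^{k-1}-1)$ is \emph{odd}. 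By LTE, $s=v_2(k-1)+2$ for odd $k$, so e.g.\ $k=5$ gives $s=4$ and no $m$ works: the $2$-adic expansion of $-3^{-4}$ begins $\ldots01111$, which is never of the form $0\underbrace{1\cdots1}_{2m-1}$ in its low $2m$ bits.

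The paper handles exactly this dichotomy. Writing $3^{k-1}\equiv 1+2^{s}+a2^{s+1}\pmod{2^{s+2}}$, it splits on the parity of $s$. When $s=2u+1$ is odd it takes $m=n=u+1$ (your regime). When $s=2u$ is even it takes $n=u$ and $m=u+1>n$, constructing $x=2^{2u+1}-1+2^{2u+2}$ explicitly; this is precisely the case your ``$n$ large'' ansatz excludes. The missing idea in your sketch is that the bit at position $2n$ in $3^{k-1}X$ is \emph{not} determined by the $2$-adic limit $-3^{-(k-1)}$ but is an extra degree of freedom tied to $n$, and one must sometimes choose $m>n$ to exploit it.
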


\begin{proof}
For an even integer $k\geqslant 2$, clearly $x=1$ is a solution of \eqref{cong} with $m=n=1$. Thus, let $k=2\ell+1\geqslant  3$ be odd. 
Then there exist an integer $s\geqslant 3$ and $a\in\{0,1\}$ satisfying
\begin{equation*}
3^{k-1}=9^{\ell}\equiv 1+2^s+a2^{s+1}\pmod {2^{s+2}}.
\end{equation*}
If $s=2u+1\geqslant  3$ is odd,  we set $x:=2^{2u+1}-1+(1-a)2^{2u+2}$, so that 
\begin{align*}
3^{k-1} x&\equiv (1+2^{2u+1}+a2^{2u+2})(2^{2u+1}-1+(1-a) 2^{2u+2})\pmod{2^{2u+3}}\\
&\equiv 2^{2u+2}-1\pmod{2^{2u+3}},
\end{align*}
and hence, the integer $x$ is a solution of \eqref{cong} with $m=n=u+1$.
If $s=2u\geqslant  4$ is even, then setting $x:=2^{2u+1}-1+2^{2u+2}$, we obtain 
\begin{align*}
3^{k-1} x&\equiv (1+2^{2u})(-1)\equiv 2^{2u}-1\pmod {2^{2u+1}}, 
\end{align*}
and the integer $x$ is a solution of \eqref{cong} with $m=u+1$ and $n=u$. 
Lemma~\ref{lem:1104} is proved. 
\end{proof}

Let $x\geqslant 1$ be an integer solution of the system of congruences \eqref{cong}. 
Then the binary expansions of the integers $x$ and $x3^{k-1}$ have the forms 
\[
(x)_2=w_10\overbrace{11\cdots1}^{2m-1}\qquad {\rm and}\qquad 
(x3^{k-1})_2=w_20\overbrace{11\cdots1}^{2n},
\]  
respectively, for some binary words $w_1$ and $w_2$, 
and hence, we obtain the equalities 
\begin{equation}\label{eq:rem}
t(1+x)=t(x)\qquad{\rm and} \qquad t(1+x3^{k-1})=1-t(x3^{k-1}).
\end{equation}

Let $k\geqslant 2$ be an integer and $\nu(k):=\nu_2(k)$ be the $2$-adic valuation of $k$. 
Fix positive integers 
$m$, $n$, $x$ (depending only on $k$) as in Lemma~\ref{lem:1104}. 
Since $k2^{-\nu(k)}$ is an odd integer, the congruence 
\begin{equation}\label{eq:1105}
k2^{-\nu(k)}\,Y\equiv x\pmod{2^{2m+2n+1}}
\end{equation}
has an integer solution. Let $y\leqslant  2^{2m+2n+1}$ be the least positive integer solution of~\eqref{eq:1105}. 

The remaining lemmas give us quantitative information about our approximations, 
as well as allowing us to optimize the range of $\beta$ in Theorem \ref{thm:main}.
Throughout the paper, let $N$ be a sufficiently large integer.  
In the following Lemmas~\ref{lem:144} and \ref{lem:113}, we investigate the Thue--Morse values of the integers  
\begin{equation}\label{eq:903}
(y2^{kN-\nu(k)+\delta}+j)^k=
\sum_{\ell=0}^{k}\binom{k}{\ell}y^\ell j^{k-\ell}\cdot 2^{(kN-\nu(k)+\delta)\ell},\qquad \delta\in\{0,1\}
\end{equation}
for $j=0,1,\dots,2^{N}+4$ and $j=2^N+2h$ $(h=3,4,\dots,2^{N-3})$. 
Define  
\[
A_{\ell,j}:=\binom{k}{\ell}y^\ell j^{k-\ell},\qquad \ell=0,1,\dots,k,\quad j=1,2,\dots,2^N+2^{N-2}.
\]
When $1\leqslant  \ell\leqslant  k$, we have 
\begin{equation}\label{eq:3125}
A_{\ell,j}\leqslant  2^k\cdot(2^{2m+2n+1})^k\cdot (5\cdot 2^{N-2})^{k-1}<2^{kN-\nu(k)+\delta}
\end{equation}
since $N$ is sufficiently large. 
Hence, using \eqref{eq:903} and \eqref{eq:3125}, we obtain 
\begin{align}
t\left(
(y2^{kN-\nu(k)+\delta}+j)^k
\right)&\equiv 
t\left(A_{0,j}+A_{1,j}2^{kN-\nu(k)+\delta}\right)+\sum_{\ell=2}^{k}t\left(A_{\ell,j}2^{(kN-\nu(k)+\delta)\ell}\right)\nonumber\\
&=t\left(j^k+zj^{k-1}2^{kN+\delta}\right)+\sum_{\ell=2}^{k}t\left(A_{\ell,j}\right)\pmod{2},
\label{eq:8469}
\end{align}
where $z:=k2^{-\nu(k)}y$. Note that the integers $A_{\ell,j}$ are independent of $\delta$.

\begin{lemma}\label{lem:144}
For every integer  $j=0,1,\dots,2^N-1$ and $j=2^N+2h$ $(h=0,1,\dots,$ $2^{N-3})$, 
we have 
\begin{equation}\label{eq:221}
t\left((y2^{kN-\nu(k)}+j)^k\right)=
t\left((y2^{kN-\nu(k)+1}+j)^k\right). 
\end{equation}
\end{lemma}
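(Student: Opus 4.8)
The goal is to show that, for the relevant range of $j$, the Thue--Morse value of $(y2^{kN-\nu(k)}+j)^k$ is unchanged when the exponent $kN-\nu(k)$ is bumped to $kN-\nu(k)+1$; that is, the cases $\delta=0$ and $\delta=1$ in \eqref{eq:8469} agree. By \eqref{eq:8469} the two values differ by $t(j^k+zj^{k-1}2^{kN})$ versus $t(j^k+zj^{k-1}2^{kN+1})$, since the sum $\sum_{\ell=2}^{k}t(A_{\ell,j})$ is independent of $\delta$. So the lemma reduces to the claim
\[
t\!\left(j^k+zj^{k-1}2^{kN}\right)=t\!\left(j^k+zj^{k-1}2^{kN+1}\right)
\]
for $j=0,1,\dots,2^N-1$ and $j=2^N+2h$ with $h=0,1,\dots,2^{N-3}$.

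First I would observe that $j^k<2^{kN}$ for $j<2^N$, and more generally that in the stated ranges $j^k$ occupies a block of low-order binary digits that does not overlap the block where $zj^{k-1}2^{kN}$ (or its shift by one) lives, provided $zj^{k-1}$ is not too large; since $z=k2^{-\nu(k)}y\leqslant k\cdot 2^{2m+2n+1}$ and $j\leqslant 2^N+2^{N-2}$, one checks $zj^{k-1}<2^{N}$ for $N$ large, so the two summands in each expression sit in disjoint binary blocks. Hence $t(j^k+zj^{k-1}2^{kN+\delta})=t(j^k)+t(zj^{k-1}2^{kN+\delta})\pmod 2=t(j^k)+t(zj^{k-1})\pmod 2$, and the right side is manifestly independent of $\delta$. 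This gives the equality immediately for $j<2^N$.

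The case $j=2^N+2h$ is where the real work lies, because now $j^k\approx 2^{kN}$ and the low block of $j^k$ does overlap the shifted block of $zj^{k-1}2^{kN}$, so the clean additivity of $s_2$ can fail through carries. Here I would expand $j^k=(2^N+2h)^k=\sum_{i}\binom{k}{i}2^{iN}(2h)^{k-i}$ and separate the top term $2^{kN}$ (contributing a single $1$) from the rest, which for $h\leqslant 2^{N-3}$ is bounded by roughly $k(5\cdot 2^{N-2})^{k-1}2^{(k-1)N}$; the point is to show that adding $2^{kN}+zj^{k-1}2^{kN}$ to the lower part $j^k-2^{kN}$ produces the same parity of carries regardless of whether we multiply $zj^{k-1}$ by $2^{kN}$ or $2^{kN+1}$. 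I expect the main obstacle to be controlling these carries precisely — i.e. verifying that the binary digit just below position $kN$ (and hence the carry behaviour into the $z$-block) is the same in both cases, which should follow from the congruence conditions built into $x$, $y$, and $z$ via \eqref{cong} and \eqref{eq:1105}, together with the parity of $h$ forcing $2h$ to contribute an extra factor of $2$ that pushes the overlap term down by one bit. Once the carry analysis is pinned down, the desired equality \eqref{eq:221} drops out, and I would close by remarking that the same computation is exactly the input needed for the approximation estimates in Section \ref{sec3}.
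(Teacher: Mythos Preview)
Your reduction via \eqref{eq:8469} to the equality $t(j^k+zj^{k-1}2^{kN})=t(j^k+zj^{k-1}2^{kN+1})$ is exactly right, and your treatment of the range $j<2^N$ is essentially the paper's. (The side claim $zj^{k-1}<2^N$ is both unnecessary and, for $k\geqslant 3$ with $j$ near $2^N$, false; all that is needed is $j^k<2^{kN}$ together with $2^{kN}\mid zj^{k-1}2^{kN+\delta}$, so that the two summands occupy disjoint binary blocks.)

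The gap is in the even case $j=2^N+2h$. You anticipate a delicate carry analysis and plan to invoke the congruences \eqref{cong} and \eqref{eq:1105}, but those congruences only control $zi^{k-1}$ for the two specific odd values $i=1,3$ --- this is precisely what drives Lemma~\ref{lem:113} --- and they give no handle on a general even $j$. In fact no carry analysis is needed at all. The observation you are missing is that $j=2(2^{N-1}+h)$ contributes not one but $k-1$ factors of $2$ to $j^{k-1}$:
\[
zj^{k-1}2^{kN+\delta}=z\,(2^{N-1}+h)^{k-1}\cdot 2^{\,kN+k-1+\delta},
\]
which is divisible by $2^{kN+k-1}$. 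Meanwhile $j\leqslant 5\cdot 2^{N-2}$ gives $j^k\leqslant 5^k\,2^{(N-2)k}<2^{kN+k-1}$ for every $k\geqslant 2$. Hence the two binary blocks are again disjoint, the additivity
\[
t\!\left(j^k+zj^{k-1}2^{kN+\delta}\right)\equiv t(j^k)+t(zj^{k-1})\pmod 2
\]
holds exactly as in the range $j<2^N$, and \eqref{eq:221} follows immediately. The arithmetic of Lemma~\ref{lem:1104} plays no role here; save it for Lemma~\ref{lem:113}.
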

\begin{proof} 
Let $\delta\in\{0,1\}$. 
For $j=0,1,\dots,2^N-1$, we have 
\begin{equation}\label{eq:313}
j^k<2^{kN}\qquad {\rm and}\qquad 
2^{kN}\mid zj^{k-1}2^{kN+\delta},
\end{equation}
and moreover, for $j=2^N+2h$ $(h=0,1,\dots,2^{N-3})$,
\begin{equation}\label{eq:3198}
j^k\leqslant (5\cdot 2^{N-2})^k<2^{kN+k-1}\quad\mbox{and}\quad
2^{kN+k-1}\mid zj^{k-1}2^{kN+\delta},
\end{equation} since $zj^{k-1}2^{kN+\delta}=z(2^{N-1}+h)^{k-1}2^{kN+k-1+\delta}.$
Hence, by \eqref{eq:313} and \eqref{eq:3198}, 
\begin{equation}\label{eq:648}
t(j^k+zj^{k-1}2^{kN+\delta})\equiv t(j^k)+t(zj^{k-1})\pmod{2},
\end{equation}
so that \eqref{eq:8469} and \eqref{eq:648} yield 
\begin{equation}\label{eq:2392}
t\left((y2^{kN-\nu(k)+\delta}+j)^k\right)\equiv  t(j^k)+t(zj^{k-1})
+\sum_{\ell=2}^{k}t(A_{\ell,j})\pmod{2}
\end{equation}
for $j=0,1,\dots,2^N-1$ and $j=2^N+2h$ $(h=0,1,\dots,2^{N-3})$. 
Thus, Lemma~\ref{lem:144} is proved since the right-hand side in \eqref{eq:2392} 
is independent of $\delta$. 
\end{proof}

Next we show that the equality~\eqref{eq:221} also holds for $j=2^N+3$, but not for $j=2^N+1$. 
\begin{lemma}\label{lem:113}
We have
\begin{equation*}
t\left((y2^{kN-\nu(k)}+2^N+1)^k\right)\neq 
t\left((y2^{kN-\nu(k)+1}+2^N+1)^k\right),\end{equation*} and \begin{equation*}
t\left((y2^{kN-\nu(k)}+2^N+3)^k\right)=
t\left((y2^{kN-\nu(k)+1}+2^N+3)^k\right).
\end{equation*}
\end{lemma}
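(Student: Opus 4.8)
The plan is to follow exactly the same computational scheme used in Lemma~\ref{lem:144}, but now tracking the arithmetic of the lower-order terms more carefully for the two specific values $j=2^N+1$ and $j=2^N+3$. Starting from the congruence~\eqref{eq:8469}, the only $\delta$-dependent piece is $t\bigl(j^k+zj^{k-1}2^{kN+\delta}\bigr)$, so the whole statement reduces to comparing this quantity for $\delta=0$ and $\delta=1$. For $j=2^N+1$ we have $j^k=(2^N+1)^k<2^{kN}$ and $zj^{k-1}2^{kN+\delta}=z(2^N+1)^{k-1}2^{kN+\delta}$, where $z=k2^{-\nu(k)}y$. The point is that $z(2^N+1)^{k-1}$ is an \emph{odd} multiple of $x$ modulo a suitable power of $2$ — indeed $z\equiv x\pmod{2^{2m+2n+1}}$ by construction and $(2^N+1)^{k-1}$ is odd — so the carry structure when adding $j^k$ (which occupies the low $kN$ bits, and whose top bits interact with the bottom bits of $z(2^N+1)^{k-1}$) is governed precisely by the congruences~\eqref{cong}. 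Shifting by one extra bit (the $\delta=1$ case) moves the ``$0\,\underbrace{11\cdots1}$'' tail past the bit where the $+1$ from $(2^N+1)^k$ lands, which is exactly the configuration in~\eqref{eq:rem} that flips the Thue--Morse value.

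Concretely, I would write $(2^N+1)^k = 2^{kN} + k2^{(k-1)N} + (\text{lower order, of size} <2^{(k-1)N})$, isolate the relevant window of bits, and show that for $\delta=0$ the addition $j^k + z(2^N+1)^{k-1}2^{kN}$ produces the binary pattern $w_1 0 \underbrace{11\cdots1}_{2m-1}$ in the critical positions (giving $t(\cdots)$ equal to $t(x)$ up to the contribution of the fixed part), while for $\delta=1$ the pattern becomes $w_1 0 \underbrace{11\cdots1}_{2m-1}$ shifted so that the lowest bit coming from $(2^N+1)^k$ forces the flip $t(1+x)$ versus... — here one must be careful: the relevant flip is actually the $x3^{k-1}$ congruence when $k$ is odd and the interplay of $m$ versus $n$. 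The cleanest route is: reduce everything modulo a single power $2^{2m+2n+1+kN}$ (so that all terms $A_{\ell,j}$ for $\ell\geqslant 2$ and all truncation errors are accounted for), use that $y$ was chosen via~\eqref{eq:1105} precisely so that $zj^{k-1}\equiv x\pmod{2^{2m+2n+1}}$ when $j$ is odd (which holds for $j=2^N+1$ and $j=2^N+3$ since $2^N$ is even, provided $(2^N+1)^{k-1}\equiv(2^N+3)^{k-1}\equiv 1$ in the needed range — and here $3^{k-1}$ enters through $(2^N+3)^{k-1}\equiv 3^{k-1}\pmod{2^N}$!), and then apply~\eqref{eq:rem} directly. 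So the $j=2^N+1$ case should invoke $t(1+x)=t(x)$ being \emph{violated} at the shifted scale, and the $j=2^N+3$ case should invoke the compensating identity coming from $t(1+x3^{k-1})=1-t(x3^{k-1})$, the two congruences conspiring so that the shift-by-one effects cancel for $j=2^N+3$ but not for $j=2^N+1$.

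The main obstacle I anticipate is bookkeeping the exact bit positions: one has to verify that when $\delta$ increases by $1$, the ``$+1$'' contributed at bit position roughly $kN$ from $j^k=(2^N+1)^k$ lands inside the all-ones block of $z j^{k-1}$ (length $2m-1$ starting after a $0$) in a way that either does or does not trigger a long carry, and that no \emph{other} bits move in a way that spuriously changes the parity of the digit sum. In particular one must check that $j^k$ for $j=2^N+1$ really has the shape $2^{kN}+(\text{stuff strictly below bit }kN)$ with controlled interaction, and compare against $j=2^N+3$ where the analogous ``stuff'' contains the factor $3^{k-1}$ in its leading coefficient — matching the second congruence in~\eqref{cong}. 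Once the two bit-patterns are pinned down, both assertions follow by reading off~\eqref{eq:rem} exactly as in the paragraph preceding Lemma~\ref{lem:144}; the parity contributions $t(zj^{k-1})$ and $\sum_{\ell\geqslant 2}t(A_{\ell,j})$ are $\delta$-independent and cancel in the comparison, so no further estimation is needed.
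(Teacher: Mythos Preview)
Your high-level plan matches the paper's --- start from \eqref{eq:8469}, isolate the only $\delta$-dependent term $t(j^k + zj^{k-1}2^{kN+\delta})$, and reduce to the identities~\eqref{eq:rem} --- but the middle step has a real gap and a couple of slips. First, $(2^N+1)^k$ is \emph{not} less than $2^{kN}$; the whole point of this lemma is that once $j>2^N$ the block $j^k$ overflows into the region occupied by $zj^{k-1}2^{kN+\delta}$, so the clean split of Lemma~\ref{lem:144} fails. Second, you have the roles of the two identities reversed: it is not that ``$t(1+x)=t(x)$ is violated at the shifted scale''. Rather, the shifted ($\delta=1$) contribution \emph{always} satisfies $t(1+2zi^{k-1})\equiv 1+t(zi^{k-1})\pmod 2$ simply because $2zi^{k-1}$ is even, and it is the \emph{unshifted} value $t(1+zi^{k-1})$ that either agrees with $t(zi^{k-1})$ (for $i=1$, via the first identity in~\eqref{eq:rem}) or differs from it (for $i=3$, via the second). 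That is why the net parity change is $1$ for $j=2^N+1$ and $0$ for $j=2^N+3$, not the other way round.

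The computational step you are missing, and which replaces all of the carry-chasing you anticipate, is this: expand both $j^k=(2^N+i)^k$ and $j^{k-1}=(2^N+i)^{k-1}$ binomially and regroup the sum $j^k+zj^{k-1}2^{kN+\delta}$ by powers of $2^N$. The coefficients at levels $\ell<k$ come only from $j^k$ and are $\delta$-independent; the coefficients at levels $\ell>k$ come only from $zj^{k-1}2^{kN+\delta}$, and the factor $2^\delta$ there is a harmless shift inside $t(\cdot)$; the sole coefficient in which the two expansions interact is at level $2^{kN}$, and it equals exactly $1+zi^{k-1}2^\delta$. So the entire comparison collapses to $t(1+2zi^{k-1})-t(1+zi^{k-1})$, and~\eqref{eq:rem} finishes the job in one line. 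Your bit-window bookkeeping would, if carried through correctly, have to rediscover this decomposition anyway; writing it down directly is both shorter and avoids the errors in your sketch.
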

\begin{proof} 
Let $\delta\in\{0,1\}$ and $j:=2^N+i$ $(i=1,3)$. 
Then we have
\begin{align}
j^k+zj^{k-1}2^{kN+\delta}&=
\sum_{\ell=0}^{k}\binom{k}{\ell}i^{k-\ell}\cdot 2^{N\ell}+\sum_{\ell=0}^{k-1}\binom{k-1}{\ell}zi^{k-1-\ell}\cdot 2^{N(\ell+k)+\delta}\nonumber\\
\label{eq:2749}&=
\sum_{\ell=0}^{k-1}\binom{k}{\ell}i^{k-\ell}\cdot 2^{N\ell}+
(1+zi^{k-1}2^{\delta})\cdot 2^{kN}\\
\nonumber&\qquad\qquad
+
\sum_{\ell=k+1}^{2k-1}\binom{k-1}{\ell-k}zi^{2k-\ell-1}\cdot 2^{N\ell+\delta}.
\end{align}
Since the integers 
\[
B_{\ell,i}:=\binom{k}{\ell}i^{k-\ell}, \quad 1+zi^{k-1}2^\delta, \quad C_{\ell,i}:=\binom{k-1}{\ell-k}zi^{2k-\ell-1}, 
\]
are independent of $N$, it follows from \eqref{eq:2749} that 
\begin{equation}\label{eq:0982647}
t\left(j^k+zj^{k-1}2^{kN+\delta}\right)\equiv\sum_{\ell=0}^{k-1}t(B_{\ell,i})+t\left(1+zi^{k-1}2^{\delta}\right)+
\sum_{\ell=k+1}^{2k-1}t(C_{\ell,i})
\pmod{2}.
\end{equation}
Moreover, combining Lemma~\ref{lem:1104} with \eqref{eq:1105}, we obtain 
\begin{align*}
zi^{k-1}=k2^{-\nu(k)}yi^{k-1}&\equiv xi^{k-1} \pmod{2^{2m+2n+1}}\\
&\equiv
\begin{cases}
2^{2m-1}-1\pmod{2^{2m}},&\mbox{if $i=1$},\\
2^{2n}-1\pmod{2^{2n+1}},&\mbox{if $i=3$},
\end{cases}
\end{align*}
so that by \eqref{eq:rem}
\begin{equation}\label{eq:9742}
t(1+zi^{k-1})=
\begin{cases}
t(zi^{k-1}),&\mbox{if $i=1$},\\
1-t(zi^{k-1}),&\mbox{if $i=3$}.
\end{cases}
\end{equation}
Thus, by \eqref{eq:8469}, \eqref{eq:0982647}, and \eqref{eq:9742}, we obtain 
\begin{align*}
t\left(
(y2^{kN-\nu(k)+1}+j)^k
\right)-&
t\left(
(y2^{kN-\nu(k)}+j)^k
\right)\\
&\equiv 
t\left(j^k+zj^{k-1}2^{kN+1}\right)-t\left(j^k+zj^{k-1}2^{kN}\right)\\[0.2cm]
&\equiv t(1+2zi^{k-1})-t(1+zi^{k-1})\\[0.2cm]
&\equiv 
1+t(zi^{k-1})-
\begin{cases}
t(zi^{k-1})&\mbox{if $i=1$},\\
1-t(zi^{k-1})&\mbox{if $i=3$}
\end{cases}\\[0.2cm]
&\equiv
\begin{cases}
1&\mbox{if $j=2^N+1$},\\
0&\mbox{if $j=2^N+3$},
\end{cases}
\!\!\pmod{2},
\end{align*}
which finishes the proof of the Lemma~\ref{lem:113}. 
\end{proof}

Define 
\[
\lambda:=1+\frac{1}{2(k-1)}>1, 
\]
and let $\lfloor \alpha \rfloor$ denote the integral part of the real number $\alpha$. 
\begin{lemma}\label{lem:1431}
For every integer $r=1,2,\dots,k-1$, and for every integer $j=0,1,\dots,2^{\lfloor \lambda N\rfloor}$, 
we have 
\begin{equation*}\label{eq:2211}
t\left((y2^{kN-\nu(k)}+j)^r\right)=
t\left((y2^{kN-\nu(k)+1}+j)^r\right).
\end{equation*}
\end{lemma}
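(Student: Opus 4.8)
The plan is to expand $(y2^{kN-\nu(k)+\delta}+j)^r$ by the binomial theorem, exactly as in \eqref{eq:903}, and to show that for the relevant range of $j$ the terms carrying the factor $2^{(kN-\nu(k)+\delta)\ell}$ with $\ell\geqslant 1$ do not interact with the low-order term $j^r$ and that their own Thue--Morse contributions are independent of $\delta$. Concretely, I would write
\[
(y2^{kN-\nu(k)+\delta}+j)^r=j^r+\sum_{\ell=1}^{r}\binom{r}{\ell}y^\ell j^{r-\ell}\,2^{(kN-\nu(k)+\delta)\ell},
\]
and set $A'_{\ell,j}:=\binom{r}{\ell}y^\ell j^{r-\ell}$ for $\ell=1,\dots,r$, paralleling the $A_{\ell,j}$ of the excerpt. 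The first thing to check is a size bound: since $j\leqslant 2^{\lfloor\lambda N\rfloor}$ with $\lambda=1+\tfrac1{2(k-1)}$, we have $j^{r-\ell}\leqslant j^{r-1}\leqslant 2^{(r-1)\lambda N}$, and $(r-1)\lambda N\leqslant (k-2)\lambda N$; together with the fixed factor $\binom{r}{\ell}y^\ell\leqslant 2^r(2^{2m+2n+1})^r$ this gives $A'_{\ell,j}<2^{kN-\nu(k)}\leqslant 2^{kN-\nu(k)+\delta}$ for $N$ large, because $(k-2)\lambda<k$ for every $k\geqslant 2$ (indeed $(k-2)\lambda = (k-2)+\tfrac{k-2}{2(k-1)}<k-1<k$). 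This is the place where the exponent $\lfloor\lambda N\rfloor$, and hence the specific value of $\lambda$, is used, so it is worth stating the inequality $(k-2)\lambda<k$ explicitly.

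Granting the size bound, the binary expansion of $(y2^{kN-\nu(k)+\delta}+j)^r$ splits into non-overlapping blocks: the block coming from $j^r$ (which occupies bit positions below $kN-\nu(k)$, since $j^r\leqslant 2^{r\lambda N}<2^{kN-\nu(k)}$ again by $(k-1)\lambda<k$... wait, here $r\leqslant k-1$ so $r\lambda\leqslant (k-1)\lambda=k-1+\tfrac12<k$), and the blocks coming from each $A'_{\ell,j}2^{(kN-\nu(k)+\delta)\ell}$, which for distinct $\ell$ sit in disjoint ranges of bit positions because $A'_{\ell,j}<2^{kN-\nu(k)+\delta}$. Consequently
\[
t\left((y2^{kN-\nu(k)+\delta}+j)^r\right)\equiv t(j^r)+\sum_{\ell=1}^{r}t(A'_{\ell,j})\pmod 2,
\]
and the right-hand side is manifestly independent of $\delta$ (the $A'_{\ell,j}$ do not involve $\delta$ at all). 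Taking the difference of the $\delta=0$ and $\delta=1$ cases gives the asserted equality. I would phrase the block-disjointness argument via the additivity relation $t(a+b2^c)\equiv t(a)+t(b)\pmod 2$ whenever $a<2^c$, iterated, exactly in the style of \eqref{eq:8469}.

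The only genuine obstacle is making the non-overlap completely rigorous: one must be sure that, even after carrying, $A'_{\ell,j}2^{(kN-\nu(k)+\delta)\ell}$ for $\ell=1,\dots,r$ together with $j^r$ occupy pairwise disjoint bit-ranges. This is immediate from the two inequalities $j^r<2^{kN-\nu(k)}$ and $A'_{\ell,j}<2^{kN-\nu(k)}$ (uniformly in $\ell$), both of which follow from $(k-1)\lambda<k$ and $N$ large; the potentially awkward point — that the sum of the tail terms could itself overflow past the next block — does not occur, since consecutive blocks start $kN-\nu(k)+\delta$ bits apart and each contributing integer is strictly smaller than $2^{kN-\nu(k)+\delta}$, leaving at least one zero bit of separation after the $\delta=0$ shift as well. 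So the proof reduces to the arithmetic inequality $(k-1)\bigl(1+\tfrac1{2(k-1)}\bigr)=k-1+\tfrac12<k$, plus the bookkeeping already carried out for Lemma~\ref{lem:144}.
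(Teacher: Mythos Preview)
Your argument is correct and matches the paper's own proof: expand by the binomial theorem, bound each coefficient $\binom{r}{\ell}y^\ell j^{r-\ell}$ by a quantity of order $2^{(k-1)\lambda N}<2^{kN-\nu(k)+\delta}$ using $(k-1)\lambda=k-\tfrac12<k$, and conclude that the binary blocks are disjoint so that $t\bigl((y2^{kN-\nu(k)+\delta}+j)^r\bigr)\equiv\sum_{\ell=0}^{r}t(D_{\ell,j})\pmod 2$ is independent of $\delta$. The paper treats all $\ell=0,\dots,r$ uniformly rather than separating out $j^r$, but this is purely cosmetic.
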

\begin{proof} 
Let $\delta\in\{0,1\}$ and $r, j$ be fixed integers as in the lemma. Then we have   
\[
(y2^{kN-\nu(k)+\delta}+j)^r=\sum_{\ell=0}^{r}\binom{r}{\ell}y^{\ell}j^{r-\ell}\cdot 2^{(kN-\nu(k)+\delta)\ell}.
\]
Since 
\[
D_{\ell,j}:=\binom{r}{\ell}y^{\ell}j^{r-\ell}\leqslant  2^{k-1}\cdot(2^{2m+2n+1})^{k-1}\cdot (2^{\lambda N})^{k-1}<2^{kN-\nu(k)+\delta},
\]
we obtain 
\[
t\left((y2^{kN-\nu(k)+\delta}+j)^r\right)\equiv \sum_{\ell=0}^{r}t(D_{\ell,j})\pmod{2},
\]
which is independent of $\delta$. Lemma~\ref{eq:2211} is poved. 
\end{proof}


\section{Proof of Theorem \ref{thm:main}}\label{sec3}

Let $\beta$ be a Pisot or Salem number with $\beta>\sqrt{\varphi}=1.272019649\ldots$. 
Suppose to the contrary that there exist an integer $k\geqslant 1$ and algebraic numbers  
$a_0,a_1,\ldots,a_k\in\mathbb{Q}(\beta)$, not all zero,  such that 
\[
a_0+\xi:=a_0+a_1\sum_{n\geqslant  1}\frac{t(n)}{\beta^n}+a_2\sum_{n\geqslant  1}\frac{t(n^2)}{\beta^n}+\cdots+a_k\sum_{n\geqslant  1}\frac{t(n^k)}{\beta^n}=0. 
\] 
We may assume that $a_0,a_1,\ldots,a_k\in\mathbb{Z}[\beta]$ and $a_k\neq0$. 
As mentioned in the introduction, the number $\sum_{n\geqslant  1}^{}t(n)\alpha^n$ is transcendental for any algebraic number $\alpha$ with $|\alpha|>1$, 
and thus, we have $k\geqslant 2$. 
Define the sequence $\{s(n)\}_{n\geqslant 1}$ by  
\begin{equation*}\label{s_n}
s(n):=a_1t(n)+a_2t(n^2)+\cdots+a_kt(n^k),\quad n\geqslant 1. 
\end{equation*}
Note that $\{s(n)\}_{n\geqslant 1}$ is bounded. 
Let $\nu(k),y$ be as in Section~\ref{sec2} and 
$N$ be a sufficiently large integer such that Lemmas \ref{lem:144},  \ref{lem:113}, and \ref{lem:1431} all hold.  
For convenience, let
\[
\kappa(N):=kN-\nu(k).
\]
Define the algebraic integers $p_N,q_N\in\mathbb{Z}[\beta]$, respectively, by 
\[
p_N:=(\beta^{y2^{\kappa(N)}}-1)\sum_{n=1}^{y2^{\kappa(N)}-1}s(n)\beta^{y2^{\kappa(N)}-n}+\sum_{n=y2^{\kappa(N)}}^{y2^{\kappa(N)+1}-1}s(n)\beta^{y2^{\kappa(N)+1}-n},
\]
and $q_N:=(\beta^{y2^{\kappa(N)}}-1)\beta^{y2^{\kappa(N)}}$. 
Then, we have 
\begin{align}
\frac{p_N}{q_N}&=\sum_{n=1}^{y2^{\kappa(N)-1}}\frac{s(n)}{\beta^{n}}+\frac{\beta^{y2^{\kappa(N)}}}{\beta^{y2^{\kappa(N)}}-1}\cdot
\sum_{n=y2^{\kappa(N)}}^{y2^{\kappa(N)+1}-1}\frac{s(n)}{\beta^{n}}\nonumber\\[0.2cm]
&=\sum_{n=1}^{y2^{\kappa(N)}-1}\frac{s(n)}{\beta^{n}}+
\left(
1+\frac{1}{\beta^{y2^{\kappa(N)}}}+\frac{1}{\beta^{y2^{\kappa(N)+1}}}+\cdots
\right)
\sum_{n=y2^{\kappa(N)}}^{y2^{\kappa(N)+1}-1}\frac{s(n)}{\beta^{n}}\nonumber\\[0.2cm]
&=
\sum_{n=1}^{y2^{\kappa(N)+1}-1}\frac{s(n)}{\beta^{n}}
+
\sum_{j=0}^{y2^{\kappa(N)}-1}\frac{s(y2^{\kappa(N)}+j)}{\beta^{y2^{\kappa(N)+1}+j}}
+O\left(
\frac{1}{\beta^{3y2^{\kappa(N)}}}
\right)\nonumber\\[0.2cm]
&
=
\sum_{n=1}^{y2^{\kappa(N)+1}+2^N}\frac{s(n)}{\beta^{n}}
+
\sum_{j=2^N+1}^{2^{\lfloor \lambda N\rfloor}}
\frac{s(y2^{\kappa(N)}+j)}{\beta^{y2^{\kappa(N)+1}+j}}
+O\left(
\frac{1}{\beta^{y2^{\kappa(N)+1}+2^{\lfloor \lambda N\rfloor}}}
\right),\label{eq:1209}
\end{align}
where, for the last equality, we used Lemmas \ref{lem:144} and \ref{lem:1431}, and the equalities 
\[
s(y2^{\kappa(N)}+j)=s(y2^{\kappa(N)+1}+j),\qquad j=0,1,\ldots,2^N-1. 
\] 
Hence, by \eqref{eq:1209} and Lemma~\ref{lem:1431} again, 
\begin{equation}\label{eq:Aktmk}
\xi-\frac{p_N}{q_N}=
a_k\sum_{j=2^N+1}^{2^{\left\lfloor \lambda N\right\rfloor}}
\frac{u(j)
}{\beta^{y2^{\kappa(N)+1}+j}} +
O\left(\frac{1}{\beta^{y2^{\kappa(N)+1}+ 2^{\left\lfloor\lambda N\right\rfloor}}}\right),
\end{equation} 
where $u(j)$ is defined by 
\[
u(j):=t\left((y2^{\kappa(N)+1}+j)^k\right)-t\left((y2^{\kappa(N)}+j)^k\right).
\]
Note that $|u(j)|\leqslant  1$ since $u(j)\in\{-1,0,1\}$ for every integer $j$. 
Moreover, by the definition of $q_N$, we have $q_N\leqslant  \beta^{y2^{\kappa(N)+1}}$, and so 
by \eqref{eq:Aktmk} 
\begin{equation}\label{eq:upperbound}
q_N\xi-p_N=O\left(\frac{1}{\beta^{2^N}}\right).
\end{equation}

On the other hand, applying Lemmas \ref{lem:144} and \ref{lem:113}, we obtain  
\begin{align}
\left|
\sum_{j=2^N+1}^{2^{\left\lfloor \lambda N\right\rfloor}}
\frac{u(j)}{\beta^{j}}\right|
&
\geqslant 
\frac{1}{\beta^{2^N+1}}-
\sum_{j=2^N+5}^{2^{\left\lfloor \lambda N\right\rfloor}}
\frac{|u(j)|
}{\beta^{j}}\nonumber \\[0.2cm]
&=
\frac{1}{\beta^{2^N+1}}
-
\sum_{
\substack{j\geqslant  2^N+5\\j{\rm :odd}}
}^{}
\frac{1
}{\beta^{j}}
-
\sum_{
\substack{
j\geqslant  2^N+2^{N-2}+1\\j{\rm :even}}
}^{}
\frac{1
}{\beta^{j}}
\nonumber\\[0.2cm]
&\geqslant 
\frac{1}{\beta^{2^N+1}}-
\frac{\beta^2}{\beta^2-1}
\left( 
\frac{1}{\beta^{2^N+5}}
+
\frac{1}{\beta^{5\cdot 2^{N-2}+2}}
\right)\nonumber\\[0.2cm]
&=
\frac{\beta^4-\beta^2-1}{\beta^2(\beta^2-1)}\cdot\frac{1}{\beta^{2^N}}+O\left(
\frac{1}{\beta^{5\cdot 2^{N-2}}}
\right),\label{eq:111902}
\end{align}
and hence, by \eqref{eq:Aktmk} and \eqref{eq:111902}, 
\begin{align*}
\beta^{y2^{\kappa(N)+1}}
\left|
\xi-\frac{p_N}{q_N}\right|&
\geqslant   
|a_k|\cdot \left|
\sum_{j=2^N+1}^{2^{\left\lfloor \lambda N\right\rfloor}}
\frac{u(j)
}{\beta^{j}}
\right|+
O\left(
\frac{1}{
\beta^{2^{\left\lfloor\lambda N\right\rfloor}}
}
\right)\\[0.2cm]
&=
|a_k|\cdot\frac{\beta^4-\beta^2-1}{\beta^2(\beta^2-1)}\cdot\frac{1}{\beta^{2^N}}+O\left(
\frac{1}{\beta^{5\cdot 2^{N-2}}}
\right)+O\left(
\frac{1}{
\beta^{2^{\left\lfloor\lambda N\right\rfloor}}
}
\right).
\end{align*}
Thus, noting that $\beta>\sqrt{\varphi}$, $a_k\neq0$, and that both $5\cdot 2^{N-2}-2^N$ and $2^{\lfloor \lambda N\rfloor}-2^N$ tend to infinity as $n\to \infty$, we obtain 
\begin{equation}\label{eq:lowerbound} 
\left|\xi-\frac{p_N}{q_N}\right|>0.
\end{equation}
Combining \eqref{eq:upperbound} and \eqref{eq:lowerbound}, 
there is a positive constant $c_1$ such that 
\begin{equation}\label{est}
0<\left|q_N\xi-p_N\right|<\frac{c_1}{\beta^{2^N}}
\end{equation}
for every sufficiently large $N$.  

Now, we complete the proof.   
When $\beta$ is a rational integer, \eqref{est} is clearly impossible for large $N$, 
since $q_N\xi-p_N$ is a non-zero rational integer. 
So, suppose not, and let $\beta=:\beta_1,\beta_2,\ldots,\beta_d$ $(d\geqslant 2)$ be the Galois conjugates over $\mathbb{Q}$ of $\beta$. 
Since $\xi=-a_0\in\mathbb{Z}[\beta]$, there exist rational integers $A_0,A_1,\ldots,A_{d-1}$ 
such that 
$
\xi=\sum_{i=0}^{d-1}{A_i}\beta^i.
$ 
Define the polynomial over $\mathbb{Z}[\beta]$
\[
F_N(X):=q_N(X)\xi(X)-p_N(X)
\]
where 
\begin{align*}
p_N(X)&:=(X^{y2^{\kappa(N)}}-1)\sum_{n=1}^{y2^{\kappa(N)}-1}s(n)X^{y2^{\kappa(N)}-n}+\sum_{n=y2^{\kappa(N)}}^{y2^{\kappa(N)+1}-1}s(n)X^{y2^{\kappa(N)+1}-n},\\
q_N(X)&:=(X^{y2^{\kappa(N)}}-1)X^{y2^{\kappa(N)}},\\
\xi(X)&:=\sum_{i=0}^{d-1}A_iX^i.
\end{align*}
Note that $p_N(\beta)=p_N$, $q_N(\beta)=q_N$, and $\xi(\beta)=\xi$. By \eqref{est},  
\begin{equation}\label{eq:pnbetasmall}
0<|F_N(\beta)|=\left|q_N\xi-p_N\right|<\frac{c_1}{\beta^{2^N}}.
\end{equation} 
Moreover, since $\beta$ is a Pisot or Salem number, we have 
$|\beta_i|\leqslant  1$ $(i=2,3,\dots,d)$, and so by the definitions of $p_N(X),q_N(X),\xi(X)$ 
there exists a positive constant $c_2$ independent of $N$ such that
\begin{equation}\label{fin}
0<|F_N(\beta_i)|\leqslant  c_2\cdot y2^{\kappa(N)},\qquad i=2,3,\dots,d,
\end{equation}
where the first inequality follows 
since $F_N(\beta_i)$ are the Galois 
conjugates of $F_N(\beta)\neq0$. 
Therefore, considering the norm over $\mathbb{Q}(\beta)/\mathbb{Q}$ of the algebraic integer $F_N(\beta)$, 
we obtain by \eqref{eq:pnbetasmall} and \eqref{fin},
\[
1\leqslant  |N_{\mathbb{Q}(\beta)/\mathbb{Q}}F_N(\beta)|
=\prod_{i=1}^{d-1}|F_N(\beta_i)|
< \frac{c_1 c_2^{d-1}\cdot (y2^{\kappa(N)})^d}{\beta^{2^N}},
\]
which is impossible for sufficiently large $N$,  
since $\kappa(N)=O(N)=o(2^N)$. 
The proof of Theorem~\ref{thm:main} is now complete.

\section{Concluding remarks and further questions}\label{sec4}

The careful reader will notice that, while the range of our results includes all Pisot numbers, it does not include all Salem numbers; for example, we miss the smallest known Salem number, Lehmer's number $\lambda\approx1.17628018$. As there is no known nontrivial lower bound on Salem numbers, for our theorem to apply to all Salem numbers, we would need our result to be valid for $\beta>1$. It seems very unlikely that the type of optimization that we have done here could be carried out to reach that range. A similar approach could increase the range a bit, but a new idea is probably necessary to get the full range of possible $\beta$. Here, our proof works for Pisot and Salem numbers, but it seems reasonable to conjecture that Theorem \ref{thm:main} holds for any algebraic number $\beta$ with $|\beta|>1$, though without more information on the structure of the sequences $\{t(n^k)\}_{n\geqslant 1}$ this seems out of reach at the moment.

When we first started our investigation, we wanted to show that the three numbers 
\[
1,\ \ \sum_{n\geqslant  1}\frac{t(n)}{b^n},\ \mbox{and} \ \sum_{n\geqslant  1}\frac{t(n^2)}{b^n}
\]
are linearly independent over $\mathbb{Q}$ for any positive integer $b\geqslant  2$. Considering this question, two properties of $t$ stood out to us. First, the sequence $\{t(n)\}_{n\geqslant 1}$ is produced by a finite automaton, so it is not a very complicated sequence. Second, the sequence $\{t(n^2)\}_{n\geqslant 1}$ is extremely complicated---
Drmota, Mauduit and Rivat \cite{DMR2019} showed that the sequence $\{t(n^2)\}_{n\geqslant 1}$ is normal to the base $2$, that is, all $2^m$ patterns of length $m$ occur for each $m$, and they occur with frequency $2^{-m}$---a classical result of Wall \cite{W1950} then implies the $\mathbb{Q}$-linear independence of the above three numbers when $b=2$.  It seems reasonable to think that for any nonzero rational numbers $q_1$ and $q_2$, the number 
\[
q_2+q_1\sum_{n\geqslant  1}\frac{t(n)}{b^n}
\]
must have a `not very complicated' base-$b$ expansion. At the level of sequences, results like this are known, but it seems at the level of expansions of real numbers there is a gap in the literature here. We make explicit a question that would be a first step in this direction.

Recall, for any sequence $f$ taking values in $\{0,1\}$, we let $p_f(m)$ denote the {\em (subword) complexity} of $f$ as a one-sided infinite word. 
In particular, $p_f(m)$ counts the number of distinct blocks of length $m$ in $f$. 
So, for example, $f$ is eventually periodic if and only if $p_f(m)$ is uniformly bounded, 
and if $f$ is $2$-normal then $p_f(m)=2^m$ for all $m$, since every binary word of any length appears in a $2$-normal binary word. 
The {\em entropy} of $f$ is the limit, $h(f):=\lim_{m\to\infty}(\log p_f(m))/m\in[0,\log 2]$. 
Considering the Thue--Morse sequence (or word) $t$, since $t$ is generated by a finite automaton, 
we have that $p_t(m)=O(m)$, so $h(t)=0$. In the present paper, we considered the sequences $t^k:=\{t(n^k)\}_{n\geqslant 1}.$ 
Moshe \cite{YM2007} established that $p_{t^k}(m)\geqslant  2^{m/2^{k-2}}$ for any $k\geqslant  2$; 
that is, that $h(t^k)\geqslant  \log(2)/2^{k-2}>0$.  
The question about numbers with lower complexity seems immediate. 

\begin{question} For a real number $\xi$, let $p_\xi(m,b)$ be the number of $b$-ary words of length $m$ appearing in the base-$b$ expansion of $\xi$. 
Is it true that for any two rational numbers $q_1$ and $q_2$, we have $p_{q_1\xi+q_2}(m,b)=O(p_\xi(m,b))\ ?$
\end{question}

\section*{Acknowledgements}

This work began during a visit to Hirosaki University. Michael Coons thanks Hirosaki Unversity for their support, and their staff and students for their outstanding hospitality.  This work was supported by the Research Institute for Mathematical Sciences, an International Joint Usage/Research Center located in Kyoto University. This research was partly supported by JSPS KAKENHI Grant Number JP22K03263. 

\bibliographystyle{amsplain}
\providecommand{\bysame}{\leavevmode\hbox to3em{\hrulefill}\thinspace}
\providecommand{\MR}{\relax\ifhmode\unskip\space\fi MR }
\providecommand{\MRhref}[2]{%
  \href{http://www.ams.org/mathscinet-getitem?mr=#1}{#2}
}
\providecommand{\href}[2]{#2}

\end{document}